\newtheorem{theorem}{Theorem}[section]
\newtheorem{lemma}{Lemma}[section]
\newtheorem{proposition}{Proposition}[section]
\newtheorem{remark}{Remark}[section]
\newtheorem{problem}{Problem}[section]
\newtheorem*{problem(P1)}{Problem (P1)}
\newtheorem*{problem(P2)}{Problem (P2)}
\newtheorem*{problem(P1a)}{Problem (P1a)}
\newtheorem*{problem(P1b)}{Problem (P1b)}
\newtheorem*{problem(P2a)}{Problem (P2a)}
\begin{document}
\title{{Linear-quadratic stochastic nonzero-sum differential games between graphon teams}\thanks{De-xuan Xu, Zhun Gou, and Nan-jing Huang: The work of these authors was supported by the National Natural Science Foundation of China (12171339).}}
\author{{De-xuan Xu$^a$, Zhun Gou$^b$ and Nan-jing Huang$^a$\footnote{Corresponding author,  E-mail: nanjinghuang@hotmail.com; njhuang@scu.edu.cn} }\\
{\scriptsize\it a. Department of Mathematics, Sichuan University, Chengdu,
Sichuan 610064, P.R. China}\\
{\scriptsize\it b. College of Mathematics and Statistics, Chongqing Technology and Business University, Chongqing 400067, P.R. China}}
\date{}
\maketitle
\vspace*{-9mm}
\begin{center}
\begin{minipage}{5.8in}
{\bf Abstract.} We study a class of nonzero-sum stochastic differential games between two teams with agents in each team interacting through graphon aggregates. On the one hand, in each large population group, agents act together to optimize a common social cost function. On the other hand, these two groups compete with each other, forming a Nash game between two graphon teams. We note that the original problem can be equivalently formulated as an infinite-dimensional two-agent Nash game. Applying the dynamic programming approach, we obtain a set of coupled operator-valued Riccati-type equations. By proving the existence of solutions to the equations mentioned above, we obtain a Nash equilibrium for the two teams.
\\ \ \\
{\bf Keywords:} Stochastic games between teams; Nash equilibrium; graphons; coupled operator-valued Riccati-type equations.
\\ \ \\
{\bf 2020 Mathematics Subject Classification:}
\\ \ \\
\end{minipage}
\end{center}
\section{Introduction}
\qquad Let $[0,T]$ be a finite-time horizon for fixed $T>0$, $(\Omega, \mathcal{F}, \mathbb{P})$ be a complete probability space. Denote $\mathcal{H}\triangleq L^2[0,1]$. The norm and inner product in $\mathcal{H}$ are denoted by $\|\cdot\|$ and $\langle\cdot,\cdot\rangle$.

In this work, we study a class of nonzero-sum games between two continuous populations of interacting agents subject to stochastic forcing, which are described by the following dynamics:
\begin{equation}\label{4-1}
  \begin{split}
    dx^1(t,\alpha)= & \left\{A_1x^1(t,\alpha)+B_1u^1(t,\alpha)+D_1\int_0^1M_1(\alpha,\beta)x^1(t,\beta)d\beta+\varepsilon F_1\int_0^1x^2(t,\beta)d\beta\right\}dt+\sigma_1dW^1(t,\alpha), \\
   dx^2(t,\alpha)= & \left\{A_2x^2(t,\alpha)+B_2u^2(t,\alpha)+D_2\int_0^1M_2(\alpha,\beta)x^2(t,\beta)d\beta+\varepsilon F_2\int_0^1x^1(t,\beta)d\beta\right\}dt+\sigma_2dW^2(t,\alpha),
  \end{split}
\end{equation}
where $x^1(t,\cdot), u^1(t,\cdot), x^2(t,\cdot), u^2(t,\cdot)$ are $\mathcal{H}$-valued stochastic processes on $[0,T]\times\Omega$, denoting the states and inputs for each team, respectively, $A_i$, $B_i$, $D_i$, $F_i$, $\sigma_i$, $\varepsilon$, $i=1,2$ are given constants, $M_1(\cdot,\cdot), M_2(\cdot,\cdot)$ are two symmetric measurable function from $[0,1]^2$ to $[-1,1]$, named ``graphons'' \cite{Gao1,Gao4,Dexuan}, describing the strength of heterogenous interactions among agents within each group, the initial conditions $x^1(0,\cdot), x^2(0,\cdot)$ are assumed to be deterministic for simplicity, $W^1, W^2$ are two independent $\mathcal{H}$-valued $Q$-Wiener processes with covariance operators $\mathbf{Q}_1, \mathbf{Q}_2$ and the following expansions:
\begin{equation*}
  W_t^1=\sum_{k=1}^\infty \sqrt{\lambda_k^1}B_t^{1,k}e_k^1,\quad W_t^2=\sum_{k=1}^\infty \sqrt{\lambda_k^2}B_t^{2,k}e_k^2,
\end{equation*}
and where $\{e_k^1\}$, $\{e_k^2\}$ are orthonormal sets in $L^2[0,1]$ diagonalizing $\mathbf{Q}_1, \mathbf{Q}_2$, respectively, $\{\lambda_k^1\}$, $\{\lambda_k^2\}$ are the corresponding eigenvalues, $\{B^{1,k}\}$, $\{B^{2,k}\}$ are mutually independent real valued Brownian motions. Equation \eqref{4-1} is a model that describes two continuous populations of interacting agents \cite{Medvedev1,Medvedev2,Kaliuzhnyi,Medvedev}.

Denote $\mathcal{F}_t^W=\sigma\{W_s^1,W_s^2\ s\leq t\}\bigvee\mathcal{N}$. The admissible strategy set of the $i$th team (i=1, 2) is given by
\begin{equation*}
  \mathcal{U}_{two}=\left\{u^i|u^i\ \mbox{is}\ \mathcal{F}^W \mbox{-progressively measurable and satisfies}\ E\int_0^T \|u_t^i\|^2dt<\infty\right\}.
\end{equation*}

The cost functions of the teams are defined by
\begin{align*}
  J^1(u^1,u^2)= & \frac{1}{2}E\int_0^T\int_0^1\left\{\bar{Q}_1\left|x^1(t,\alpha)-\Gamma_1\int_0^1M_1(\alpha,\beta)x^1(t,\beta)d\beta\right|^2+R_{11}\big|u^1(t,\alpha)\big|^2\right.\\
  & \left.+\varepsilon R_{12}\big|u^2(t,\alpha)\big|^2\right\}d\alpha dt+\frac{1}{2}E\int_0^1\bar{Q}_{1f}\big|x^1(T,\alpha)\big|^2d\alpha
\end{align*}
and
\begin{align*}
  J^2(u^1,u^2)= & \frac{1}{2}E\int_0^T\int_0^1\left\{\bar{Q}_2\left|x^2(t,\alpha)-\Gamma_2\int_0^1M_2(\alpha,\beta)x^2(t,\beta)d\beta\right|^2+R_{22}\big|u^2(t,\alpha)\big|^2\right.\\
  & \left.+\varepsilon R_{21}\big|u^1(t,\alpha)\big|^2\right\}d\alpha dt+\frac{1}{2}E\int_0^1\bar{Q}_{2f}\big|x^2(T,\alpha)\big|^2d\alpha,
\end{align*}
where $\bar{Q}_i\geq 0$, $\bar{Q}_{if}\geq 0$, and $R_{ii}>0$ for $i=1,2$.

We now consider the Nash game problem for two continuous populations.
\begin{problem}\label{twonash}
Seek $u^i\in\mathcal{U}_{two}$, $i=1,2$, such that
$$
  J^1(u^1,u^2)\leq J^1(v^1,u^2),\quad J^2(u^1,u^2)\leq J^2(u^1,v^2)
$$
for any admissible control $v^1,v^2$ in $\mathcal{U}_{two}$. The strategies $u=(u^1,u^2)$ is called a two-team Nash equilibrium.
\end{problem}

It is worth noticing that Problem \ref{twonash} is one kind of stochastic games problem among teams. To date, several researchers have investigated problems of this type. For example, Sanjari et al. \cite{Sanjari} studied Nash equilibrium problems between two teams of finite and infinite agents, respectively, in both static and discrete-time dynamic settings, where the interactions among agents within each team are symmetric. Zaman et al. \cite{Zaman} investigated reinforcement learning in nonzero-sum games involving multiple teams. Their work also focused on discrete-time models, with homogeneous agents within each team. Carmona et al. \cite{Carmona7} discussed a class of mean-field type zero-sum games involving two decision-makers.

To the best knowledge of the authors, there is no research investigating the Nash game problems between two teams with hetergeneous agents in continuous time. Motivated by this, we analyze a class of Nash games between two teams with agents interacting through graphon aggregates. In contrast to \cite{Sanjari,Zaman}, our model is set in continuous time and allows for asymmetric interactions among agents within each team. It is noted that, in our model \eqref{4-1}, we use nonlocal diffusion equations to describe the dynamics of two populations, where each contains a continuum of agents. There is another common framework consisting of a continuum of coupled systems, which also models interacting diffusions (see, e.g., \cite{Coppini}).

We obtain in the sequel that Problem \ref{twonash} is essentially an infinite-dimensional two-agent Nash game. As far as we are aware, the study of infinite-dimensional stochastic differential games remains at an early stage, with relatively few results available in the current literature. Fouque and Zhang \cite{Fouque} reformulated a stochastic game with delay as an infinite-dimensional stochastic game and solved both the corresponding infinite-dimensional $N$-player game as well as the mean field game. More recently, \cite{Liu,Ghilli,Federico} investigated infinite-dimensional mean field games.
We remark that although this work and \cite{Fouque} both utilize the dynamic programming principle to solve the infinite-dimensional game problems, the approach adopted in this paper is different. For the infinite-dimensional two-agent Nash game considered in this work, we employ the dynamic programming principle to reformulate the infinite-dimensional Nash game as a problem of existence of solutions to a system of coupled operator-valued Riccati-type equations. By establishing the existence of solutions to this set of equations, we obtain a solution to the original Nash game problem between two teams.

The organization of this paper is as follows. In Section 2, we rewrite the two-team Nash game as an infinite-dimensional two-agent Nash game, and obtain a Nash equilibrium solution which depends on a set of coupled operator-valued Riccati-type equations. In Section 3, we prove the existence of solutions to the Riccati-type equations, before we summarize the results in Section 4.

\hspace*{\fill}\\
\noindent

\section{Two-team Nash equilibrium}
\qquad Now we propose the definition of a graphon operator $T_M:L^2[0,1]\to L^2[0,1]$ as follows \cite{Lovasz}
\begin{equation}\label{graphonoperator}
(T_M\varphi)(\alpha)=\int_{[0,1]}M(\alpha,\beta)\varphi(\beta)d\beta,\quad \forall\varphi\in L^2[0,1],
\end{equation}
where $M(\cdot,\cdot)$ is a graphon. To ease notation, we are inclined to denote graphon operators by $M$ instead of $T_M$ when no confusion occurs. The graphon operator $M$ is self-adjoint and compact \cite{Lovasz}.

Then we may rewrite problem \eqref{4-1} as:
\begin{equation}\label{4-2}
 \left\{ \begin{split}
    dx_t^1= & \left\{A_1x_t^1+B_1u_t^1+D_1M_1x_t^1+\varepsilon F_1\bar{M}x_t^2\right\}dt+\sigma_1dW_t^1,\\
    dx_t^2= & \left\{A_2x_t^2+B_2u_t^2+D_2M_2x_t^2+\varepsilon F_2\bar{M}x_t^1\right\}dt+\sigma_2dW_t^2,
  \end{split}\right.
\end{equation}
where $M_1, M_2$ are graphon operators corresponding to $M_1(\cdot,\cdot), M_2(\cdot,\cdot)$, and $\bar{M}$ is the graphon operator corresponding to graphon $\bar{M}(\cdot,\cdot)=1$. It is easy to verify that $W=(W^1,W^2)$ is a $\mathcal{H}\oplus \mathcal{H}$-valued (or equivalently, $\mathcal{H}^2$-valued) $Q$-Wiener process. Indeed, from the series expansion of $W^1,W^2$, one has
\begin{equation*}
  W_t=\begin{pmatrix}
W_t^1 \\
W_t^2
\end{pmatrix}=\begin{pmatrix}
W_t^1 \\
0
\end{pmatrix}+\begin{pmatrix}
0 \\
W_t^2
\end{pmatrix}=
\sum_{k=1}^\infty \sqrt{\lambda_k^1}B_t^{1,k}\begin{pmatrix}
e_k^1 \\
0
\end{pmatrix}+\sum_{k=1}^\infty \sqrt{\lambda_k^2}B_t^{2,k}\begin{pmatrix}
0 \\
e_k^2
\end{pmatrix}.
\end{equation*}
Then, we have that $W_t$ is a $Q$-Wiener process with covariance operator $\mathbf{Q}=\mathbf{Q}_1 \oplus\mathbf{Q}_2$, $\left\{\begin{pmatrix}
e_k^1 \\
0
\end{pmatrix},\begin{pmatrix}
0 \\
e_k^2
\end{pmatrix}\right\}$ are orthonormal sets in $\mathcal{H}^2$ diagonalizing $\mathbf{Q}$, $\{\lambda_k^1,\lambda_k^2\}$ are the corresponding eigenvalues. Let
\begin{equation*}
  x_t=\begin{pmatrix}
x_t^1 \\
x_t^2
\end{pmatrix}, \ \mathbf{A}_\varepsilon=\begin{pmatrix} A_1I+D_1M_1 & \varepsilon F_1\bar{M} \\ \varepsilon F_2\bar{M} & A_2I+D_2M_2 \end{pmatrix},\
\mathbf{B}_1=\begin{pmatrix}
B_1I \\
0
\end{pmatrix},\
\mathbf{B}_2=\begin{pmatrix}
0 \\
B_2I
\end{pmatrix},\ \mathbf{\Sigma}=\begin{pmatrix}
\sigma_1I &  \\
  & \sigma_2I
\end{pmatrix},
\end{equation*}
where $I$ is the identity operator on $\mathcal{H}$. Therefore, one can treat \eqref{4-2} as the following stochastic evolution
\begin{equation}\label{4-3}
  dx_t=(\mathbf{A}_\varepsilon x_t+\mathbf{B}_1u_t^1+\mathbf{B}_2u_t^2)dt+\mathbf{\Sigma}dW_t.
\end{equation}
Similar to the proof of [47, Th.3.14], we can show that the equation \eqref{4-3} admits a unique strong (mild) solution.

Obviously,
\begin{align*}
   J^1(u^1,u^2)= & \frac{1}{2}E\int_0^T\big\{\bar{Q}_1\|(I-\Gamma_1M_1)x_t^1\|^2+R_{11}\|u_t^1\|^2+\varepsilon R_{12}\|u_t^2\|^2\big\}dt+\frac{1}{2}E\bar{Q}_{1f}\|x_T^1\|^2\\
  = & E\int_0^T\left\{\Big\langle \frac{1}{2}\bar{Q}_1\big((I-\Gamma_1M_1)^2\oplus 0\big)x_t,x_t\Big\rangle_{\mathcal{H}^2}+\Big\langle \frac{1}{2}R_{11}u_t^1,u_t^1\Big\rangle\right.\\
  & \left.+\Big\langle \frac{1}{2}\varepsilon R_{12}u_t^2,u_t^2\Big\rangle\right\}dt+E\Big\langle \frac{1}{2}\bar{Q}_{1f}(I\oplus 0)x_T,x_T\Big\rangle_{\mathcal{H}^2},
\end{align*}
where $0$ is the zero operator. Similarly, the cost function of the second team is given by
\begin{align*}
  J^2(u^1,u^2)= & E\int_0^T\left\{\Big\langle \frac{1}{2}\bar{Q}_2\big(0\oplus(I-\Gamma_2M_2)^2\big)x_t,x_t\Big\rangle_{\mathcal{H}^2}+\Big\langle \frac{1}{2}R_{22}u_t^2,u_t^2\Big\rangle\right.\\
  & \left.+\Big\langle \frac{1}{2}\varepsilon R_{21}u_t^1,u_t^1\Big\rangle\right\}dt+E\Big\langle \frac{1}{2}\bar{Q}_{2f}(0\oplus I)x_T,x_T\Big\rangle_{\mathcal{H}^2}.
\end{align*}

Therefore, the two-team Nash game problem is equivalent to an infinite dimensional two-agent Nash game problem. When $\varepsilon=1$, Problem \ref{twonash} is a standard infinite dimensional two-agent Nash game problem. When $\varepsilon$ lies in a small interval around zero, Problem \ref{twonash} is an infinite dimensional two-agent Nash game with weakly coupled agents. When $\varepsilon=0$, Problem \ref{twonash} degenerates into two independent single decision maker problems.

We now employ the dynamic programming approach to derive the two-team Nash equilibrium solution for Problem \ref{twonash}. Define the operator $\mathcal{L}_{u^1u^2}$ as follows:
\begin{equation*}
  \mathcal{L}_{u^1u^2}V(x)=\langle V_x(x),\mathbf{A}_\varepsilon x+\mathbf{B}_1u^1+\mathbf{B}_2u^2\rangle_{\mathcal{H}^2}+\frac{1}{2}tr[V_{xx}(x)\mathbf{\Sigma} \mathbf{Q}\mathbf{\Sigma}^*],
\end{equation*}
for any $u^1,u^2\in \mathcal{H}$, $x\in\mathcal{H}^2$ and twice Fr\'{e}chet differentiable real function $V(x)$ on $\mathcal{H}^2$. Then we propose sufficient conditions for Nash equilibrium in the following proposition.
\begin{proposition}
Suppose that there exist feedback strategies $\bar{u}^i=\bar{K}_i(t,x)$, and real function $V^i(t,x):[0,T]\times\mathcal{H}^2\rightarrow \mathbb{R}$, $i=1,2$ such that
\begin{itemize}
    \item[(i)] For each $t\in [0,T]$, $V^i(t,x)$ is twice Fr\'{e}chet differentiable in $x$, and for each $x\in\mathcal{H}^2$, $V^i(t,x)$ is differentiable in $t$;
    \item[(ii)] For any $x\in\mathcal{H}^2$,
     $$
     V^1(T,x)=\langle \frac{1}{2}\bar{Q}_{1f}(I\oplus 0)x,x\rangle_{\mathcal{H}^2},\ V^2(T,x)=\langle \frac{1}{2}\bar{Q}_{2f}(0\oplus I)x,x\rangle_{\mathcal{H}^2};
     $$
    \item[(iii)] For any $x\in\mathcal{H}^2$ and $u^1\in\mathcal{H}$,
    $$\begin{aligned}[t]
                 0=& V_t^1+\mathcal{L}_{\bar{u}^1\bar{u}^2}V^1+\Big\langle \frac{1}{2}\bar{Q}_1\big((I-\Gamma_1M_1)^2\oplus 0\big)x,x\Big\rangle_{\mathcal{H}^2}+\Big\langle \frac{1}{2}R_{11}\bar{u}^1,\bar{u}^1\Big\rangle_\mathcal{H}+\Big\langle \frac{1}{2}\varepsilon R_{12}\bar{u}^2,\bar{u}^2\Big\rangle_\mathcal{H}\\
  \leq & V_t^1+\mathcal{L}_{u^1\bar{u}^2}V^1+\Big\langle \frac{1}{2}\bar{Q}_1\big((I-\Gamma_1M_1)^2\oplus 0\big)x,x\Big\rangle_{\mathcal{H}^2}+\Big\langle \frac{1}{2}R_{11}u^1,u^1\Big\rangle_\mathcal{H}+\Big\langle \frac{1}{2}\varepsilon R_{12}\bar{u}^2,\bar{u}^2\Big\rangle_\mathcal{H};
               \end{aligned}
  $$
  \item[(iv)] For any $x\in\mathcal{H}^2$ and $u^2\in\mathcal{H}$,
  $$
  \begin{aligned}[t]
                 0=& V_t^2+\mathcal{L}_{\bar{u}^1\bar{u}^2}V^2+\Big\langle \frac{1}{2}\bar{Q}_2\big(0\oplus(I-\Gamma_2M_2)^2\big)x,x\Big\rangle_{\mathcal{H}^2}+\Big\langle \frac{1}{2}R_{22}\bar{u}^2,\bar{u}^2\Big\rangle_\mathcal{H}+\Big\langle \frac{1}{2}\varepsilon R_{21}\bar{u}^1,\bar{u}^1\Big\rangle_\mathcal{H}\\
  \leq & V_t^2+\mathcal{L}_{\bar{u}^1u^2}V^2+\Big\langle \frac{1}{2}\bar{Q}_2\big(0\oplus(I-\Gamma_2M_2)^2\big)x,x\Big\rangle_{\mathcal{H}^2}+\Big\langle \frac{1}{2}R_{22}u^2,u^2\Big\rangle_\mathcal{H}+\Big\langle \frac{1}{2}\varepsilon R_{21}\bar{u}^1,\bar{u}^1\Big\rangle_\mathcal{H}.
               \end{aligned}
  $$
\end{itemize}
Then for $i=1,2$, the set of strategies $\bar{u}^i=\bar{K}_i(t,x)$ is a two-team Nash equilibrium for Problem \ref{twonash}, and the corresponding cost functions are $J^i=V^i(0,x_0)$ for $i=1,2$.
\end{proposition}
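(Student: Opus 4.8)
The plan is to run a verification (sufficiency) argument adapted to the two-player Nash setting. Because the two defining inequalities of a Nash equilibrium are symmetric in the two teams, it suffices to establish the first one, namely $J^1(\bar u^1,\bar u^2)\le J^1(v^1,\bar u^2)$ for every admissible $v^1$; the second then follows verbatim after exchanging the roles of the indices and invoking (ii) and (iv) in place of (ii) and (iii). Accordingly I freeze team $2$ at its candidate feedback $\bar u^2=\bar K_2(t,x)$ and regard team $1$ as facing a single standard stochastic control problem whose candidate value function is $V^1$.

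The central tool is It\^o's formula applied to $t\mapsto V^1(t,x_t)$ along the closed-loop trajectory $x_t$ generated by the pair $(v^1,\bar u^2)$ through \eqref{4-3}. A key simplification is that the generator $\mathbf{A}_\varepsilon$ is a \emph{bounded} operator on $\mathcal{H}^2$: each block $A_iI+D_iM_i$ and $\varepsilon F_i\bar M$ is bounded because the graphon operators $M_1,M_2,\bar M$ are compact, hence bounded. Consequently \eqref{4-3} admits a genuine (strong) solution and the classical Hilbert-space It\^o formula is available without the technicalities of mild-solution calculus. Using condition (i) to supply the requisite differentiability, It\^o's formula gives
\begin{equation*}
V^1(T,x_T)-V^1(0,x_0)=\int_0^T\big[V_t^1+\mathcal{L}_{v^1\bar u^2}V^1\big](t,x_t)\,dt+\int_0^T\big\langle V_x^1(t,x_t),\mathbf{\Sigma}\,dW_t\big\rangle_{\mathcal{H}^2}.
\end{equation*}

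Taking expectations removes the stochastic integral (whose justification I return to below), and I then invoke condition (iii). Its first line is the Hamilton--Jacobi--Bellman identity holding along the equilibrium, while its second line, evaluated at $u^1=v_t^1$ and $x=x_t$, yields the pointwise lower bound
\begin{equation*}
V_t^1+\mathcal{L}_{v^1\bar u^2}V^1\ \ge\ -\Big\langle\tfrac12\bar Q_1\big((I-\Gamma_1M_1)^2\oplus0\big)x_t,x_t\Big\rangle_{\mathcal{H}^2}-\Big\langle\tfrac12R_{11}v_t^1,v_t^1\Big\rangle-\Big\langle\tfrac12\varepsilon R_{12}\bar u_t^2,\bar u_t^2\Big\rangle.
\end{equation*}
Substituting this bound into the expected identity, using the terminal condition (ii) to replace $E\,V^1(T,x_T)$ by the terminal cost of $J^1$, and rearranging produces exactly $V^1(0,x_0)\le J^1(v^1,\bar u^2)$. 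Repeating the computation with $v^1$ replaced by $\bar u^1$ turns every inequality into the equality furnished by the first line of (iii), giving $V^1(0,x_0)=J^1(\bar u^1,\bar u^2)$; combining the two yields the first Nash inequality and simultaneously the identity $J^1=V^1(0,x_0)$. The parallel treatment of team $2$ using (iv) completes the argument.

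The step I expect to require the most care is the vanishing of the expected stochastic integral, which strictly demands that $t\mapsto\int_0^t\langle V_x^1(s,x_s),\mathbf{\Sigma}\,dW_s\rangle_{\mathcal{H}^2}$ be a true (not merely local) martingale. I would secure this by localization: stop at the exit times $\tau_n$ of $\|x_t\|_{\mathcal{H}^2}$ from balls of radius $n$, apply the optional-sampling identity on $[0,\tau_n]$, and let $n\to\infty$. The passage to the limit is justified by the admissibility bound $E\int_0^T\|u_t^i\|^2\,dt<\infty$ together with the second-moment estimate $E\sup_{t\le T}\|x_t\|_{\mathcal{H}^2}^2<\infty$ for \eqref{4-3} (finite since $\mathbf{A}_\varepsilon$ is bounded and $\mathbf{\Sigma}\mathbf{Q}\mathbf{\Sigma}^*$ is trace class), and by the quadratic form of the eventual $V^1$, whose gradient $V_x^1$ grows at most linearly in $x$ so that the integrand is square-integrable.
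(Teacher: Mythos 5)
Your proposal is correct and follows essentially the same verification argument as the paper: apply It\^{o}'s formula to $V^1$ along both the unilateral-deviation trajectory $(v^1,\bar u^2)$ and the equilibrium trajectory $(\bar u^1,\bar u^2)$, take expectations, and use the inequality/equality halves of (iii) together with the terminal condition (ii) to sandwich $V^1(0,x_0)$ between $J^1(\bar u^1,\bar u^2)$ and $J^1(v^1,\bar u^2)$, with the symmetric argument via (iv) for team 2. Your additional localization argument justifying that the expected stochastic integral vanishes is a technical refinement that the paper silently omits, but it does not alter the route.
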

The proof of this proposition is similar to the one of Lemma 4.1 in \cite{Ichikawa}, and we present a full proof here for completeness.

\begin{proof}
Let $\bar{x}_t$ denote the solution of \eqref{4-3} corresponding to strategies $\bar{u}_t^1,\bar{u}_t^2$. By It\^{o}'s formula, we have
\begin{align}\label{unknown9}
  &\Big\langle \frac{1}{2}\bar{Q}_{1f}(I\oplus 0)\bar{x}_T,\bar{x}_T\Big\rangle_{\mathcal{H}^2}-V^1(0,x_0)\nonumber\\
  = & V^1(T,\bar{x}_T)-V^1(0,x_0)\nonumber\\
  = & \int_0^T\langle V_x^1(s,\bar{x}_s),\mathbf{\Sigma} dW_s\rangle_{\mathcal{H}^2}+\int_0^T\Big\{V_t^1(s,\bar{x}_s)+\langle V_x^1(s,\bar{x}_s),\mathbf{A}_\varepsilon\bar{x}_s+\mathbf{B}_1\bar{u}_s^1+\mathbf{B}_2\bar{u}_s^2\rangle_{\mathcal{H}^2}\nonumber\\
  & +\frac{1}{2}tr[V_{xx}^1(s,\bar{x}_s)\mathbf{\Sigma} \mathbf{Q}\mathbf{\Sigma}^*]\Big\}ds.
\end{align}
Taking expectation on both sides of \eqref{unknown9} and applying (iii), one has
\begin{align*}
  & E\Big\langle \frac{1}{2}\bar{Q}_{1f}(I\oplus 0)\bar{x}_T,\bar{x}_T\Big\rangle_{\mathcal{H}^2}-V^1(0,x_0)\\
  =& E\int_0^T\Big\{V_t^1(s,\bar{x}_s)+\langle V_x^1(s,\bar{x}_s),\mathbf{A}_\varepsilon\bar{x}_s+\mathbf{B}_1\bar{u}_s^1+\mathbf{B}_2\bar{u}_s^2\rangle_{\mathcal{H}^2}+\frac{1}{2}tr[V_{xx}^1(s,\bar{x}_s)\mathbf{\Sigma} \mathbf{Q}\mathbf{\Sigma}^*]\Big\}ds\\
  =& -E\int_0^T\Big\{\Big\langle \frac{1}{2}\bar{Q}_1\big((I-\Gamma_1M_1)^2\oplus 0\big)\bar{x}_s,\bar{x}_s\Big\rangle_{\mathcal{H}^2}+\Big\langle \frac{1}{2}R_{11}\bar{u}_s^1,\bar{u}_s^1\Big\rangle+\Big\langle \frac{1}{2}\varepsilon R_{12}\bar{u}_s^2,\bar{u}_s^2\Big\rangle\Big\}ds.
\end{align*}
Thus
\begin{align}\label{4-4}
  V^1(0,x_0)=& E\int_0^T\Big\{\Big\langle \frac{1}{2}\bar{Q}_1\big((I-\Gamma_1M_1)^2\oplus 0\big)\bar{x}_s,\bar{x}_s\Big\rangle_{\mathcal{H}^2}\nonumber\\
  & +\Big\langle \frac{1}{2}R_{11}\bar{u}_s^1,\bar{u}_s^1\Big\rangle+\Big\langle \frac{1}{2}\varepsilon R_{12}\bar{u}_s^2,\bar{u}_s^2\Big\rangle\Big\}ds+E\Big\langle \frac{1}{2}\bar{Q}_{1f}(I\oplus 0)\bar{x}_T,\bar{x}_T\Big\rangle_{\mathcal{H}^2}.
\end{align}
For any admissible strategy $u^1$, from It\^{o}'s formula and (iii), we obtain
\begin{align*}
  & E\langle \frac{1}{2}\bar{Q}_{1f}(I\oplus 0)x_T,x_T\rangle_{\mathcal{H}^2}-V^1(0,x_0)\\
  =& E\int_0^T\Big\{V_t^1(s,x_s)+\langle V_x^1(s,x_s),\mathbf{A}_\varepsilon x_s+\mathbf{B}_1u_s^1+\mathbf{B}_2\bar{u}_s^2\rangle_{\mathcal{H}^2}+\frac{1}{2}tr[V_{xx}^1(s,x_s)\mathbf{\Sigma} \mathbf{Q}\mathbf{\Sigma}^*]\Big\}ds\\
  \geq & -E\int_0^T\Big\{\Big\langle \frac{1}{2}\bar{Q}_1\big((I-\Gamma_1M_1)^2\oplus 0\big)x_s,x_s\Big\rangle_{\mathcal{H}^2}+\Big\langle \frac{1}{2}R_{11}u_s^1,u_s^1\Big\rangle+\Big\langle \frac{1}{2}\varepsilon R_{12}\bar{u}_s^2,\bar{u}_s^2\Big\rangle\Big\}ds.
\end{align*}
Then
\begin{align}\label{4-5}
  V^1(0,x_0)\leq & E\int_0^T\Big\{\Big\langle \frac{1}{2}\bar{Q}_1\big((I-\Gamma_1M_1)^2\oplus 0\big)x_s,x_s\Big\rangle_{\mathcal{H}^2}\nonumber\\
  & +\Big\langle \frac{1}{2}R_{11}u_s^1,u_s^1\Big\rangle+\Big\langle \frac{1}{2}\varepsilon R_{12}\bar{u}_s^2,\bar{u}_s^2\Big\rangle\Big\}ds+E\Big\langle \frac{1}{2}\bar{Q}_{1f}(I\oplus 0)x_T,x_T\Big\rangle_{\mathcal{H}^2}.
\end{align}
From \eqref{4-4}-\eqref{4-5}, one has $J^1(\bar{u}^1,\bar{u}^2)\leq J^1(u^1,\bar{u}^2)$.
Similarly, it holds that $J^2(\bar{u}^1,\bar{u}^2)\leq J^2(\bar{u}^1,u^2)$. This completes the proof.
\end{proof}

We seek functions $V^i(t,x)$, $i=1,2$ of the form
\begin{equation*}
  V^i(t,x)=\langle \Pi^i(t)x,x\rangle_{\mathcal{H}^2}+q^i(t),\ \Pi^i(t)\in\mathbb{S}(\mathcal{H}^2).
\end{equation*}
Then
\begin{equation*}
  \mathcal{L}_{u^1u^2}V^i(t,x)=\langle 2\Pi^i(t)x,\mathbf{A}_\varepsilon x+\mathbf{B}_1u^1+\mathbf{B}_2u^2\rangle_{\mathcal{H}^2}+tr[\Pi^i(t)\mathbf{\Sigma}\mathbf{Q}\mathbf{\Sigma}^*],\ x\in\mathcal{H}^2,u^1,u^2\in\mathcal{H}.
\end{equation*}
It follows from (iii) and (iv) that
\begin{equation}\label{4-6}
  \left\{
  \begin{split}
    &\bar{u}^1=-\frac{2}{R_{11}}\mathbf{B}_1^*\Pi^1(t)x, \\
      &0=\frac{d}{dt}\langle \Pi^1(t)x,x\rangle_{\mathcal{H}^2}+\Big\langle \frac{1}{2}\bar{Q}_1\big((I-\Gamma_1M_1)^2\oplus 0\big)x,x\Big\rangle_{\mathcal{H}^2}\\
      &\quad +\langle 2\Pi^1(t)x,\mathbf{A}_\varepsilon x+\mathbf{B}_1\bar{u}^1+\mathbf{B}_2\bar{u}^2\rangle_{\mathcal{H}^2}+\langle \frac{1}{2}R_{11}\bar{u}^1,\bar{u}^1\rangle+\langle \frac{1}{2}\varepsilon R_{12}\bar{u}^2,\bar{u}^2\rangle, \ x\in\mathcal{H}^2,\\
      &0=\frac{d}{dt}q^1(t)+tr[\Pi^1(t)\mathbf{\Sigma}\mathbf{Q}\mathbf{\Sigma}^*],\\
      &\Pi^1(T)=\frac{1}{2}\bar{Q}_{1f}(I\oplus 0),\quad q^1(T)=0
  \end{split}
  \right.
\end{equation}
and
\begin{equation}\label{4-7}
  \left\{
  \begin{split}
    &\bar{u}^2=-\frac{2}{R_{22}}\mathbf{B}_2^*\Pi^2(t)x, \\
      &0=\frac{d}{dt}\langle \Pi^2(t)x,x\rangle_{\mathcal{H}^2}+\Big\langle \frac{1}{2}\bar{Q}_2\big(0\oplus(I-\Gamma_2M_2)^2\big)x,x\Big\rangle_{\mathcal{H}^2}\\
      &\quad +\langle 2\Pi^2(t)x,\mathbf{A}_\varepsilon x+\mathbf{B}_1\bar{u}^1+\mathbf{B}_2\bar{u}^2\rangle_{\mathcal{H}^2}+\langle \frac{1}{2}R_{22}\bar{u}^2,\bar{u}^2\rangle+\langle \frac{1}{2}\varepsilon R_{21}\bar{u}^1,\bar{u}^1\rangle,\ x\in\mathcal{H}^2,\\
      &0=\frac{d}{dt}q^2(t)+tr[\Pi^2(t)\mathbf{\Sigma}\mathbf{Q}\mathbf{\Sigma}^*],\\
      &\Pi^2(T)=\frac{1}{2}\bar{Q}_{2f}(0\oplus I),\quad q^2(T)=0.
  \end{split}
  \right.
\end{equation}
Thus, from equations \eqref{4-6} and \eqref{4-7}, we can obtain the following result.
\begin{theorem}
Problem \ref{twonash} has the following two-team Nash equilibrium solution
\begin{equation*}
  \bar{u}^i=-\frac{2}{R_{ii}}\mathbf{B}_i^*\Pi^i(t)x,\ i=1,2,
\end{equation*}
and the corresponding cost functions are
\begin{equation*}
  J^i(\bar{u}^1,\bar{u}^2)=v^i(0,x_0)=\langle \Pi^i(0)x_0,x_0\rangle_{\mathcal{H}^2}+q^i(0),
\end{equation*}
when $\Pi^1(t)$ and $\Pi^2(t)$ satisfy the following set of coupled operator-valued Riccati-type equations
\begin{equation}\label{4-8}
\left\{
  \begin{split}
    0=&\frac{d}{dt}\langle \Pi^1(t)x,x\rangle_{\mathcal{H}^2}+\langle 2\Pi^1(t)\mathbf{A}_\varepsilon x,x\rangle_{\mathcal{H}^2}-\langle \frac{2}{R_{11}}\Pi^1(t)\mathbf{B}_1\mathbf{B}_1^*\Pi^1(t)x,x\rangle_{\mathcal{H}^2}\\
    &-\langle \frac{2}{R_{22}}\Pi^1(t)\mathbf{B}_2\mathbf{B}_2^*\Pi^2(t)x,x\rangle_{\mathcal{H}^2}-\langle \frac{2}{R_{22}}\Pi^2(t)\mathbf{B}_2\mathbf{B}_2^*\Pi^1(t)x,x\rangle_{\mathcal{H}^2}\\
    &+\langle \varepsilon\frac{2R_{12}}{R_{22}^2}\Pi^2(t)\mathbf{B}_2\mathbf{B}_2^*\Pi^2(t)x,x\rangle_{\mathcal{H}^2}\\
    &+\big\langle \frac{1}{2}\bar{Q}_1\big((I-\Gamma_1M_1)^2\oplus 0\big)x,x\big\rangle_{\mathcal{H}^2},\quad x\in\mathcal{H}^2,\\
    0=&\frac{d}{dt}\langle \Pi^2(t)x,x\rangle_{\mathcal{H}^2}+\langle 2\Pi^2(t)\mathbf{A}_\varepsilon x,x\rangle_{\mathcal{H}^2}-\langle \frac{2}{R_{22}}\Pi^2(t)\mathbf{B}_2\mathbf{B}_2^*\Pi^2(t)x,x\rangle_{\mathcal{H}^2}\\
    &-\langle \frac{2}{R_{11}}\Pi^2(t)\mathbf{B}_1\mathbf{B}_1^*\Pi^1(t)x,x\rangle_{\mathcal{H}^2}-\langle \frac{2}{R_{11}}\Pi^1(t)\mathbf{B}_1\mathbf{B}_1^*\Pi^2(t)x,x\rangle_{\mathcal{H}^2}\\
    &+\langle \varepsilon\frac{2R_{21}}{R_{11}^2}\Pi^1(t)\mathbf{B}_1\mathbf{B}_1^*\Pi^1(t)x,x\rangle_{\mathcal{H}^2}\\
    &+\langle \frac{1}{2}\bar{Q}_2\big(0\oplus(I-\Gamma_2M_2)^2\big)x,x\big\rangle_{\mathcal{H}^2},\quad x\in\mathcal{H}^2,\\
    \Pi^1(T) & =\frac{1}{2}\bar{Q}_{1f}(I\oplus 0),\quad\Pi^2(T)=\frac{1}{2}\bar{Q}_{2f}(0\oplus I).
  \end{split}
  \right.
\end{equation}
\end{theorem}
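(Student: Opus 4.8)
The plan is to verify that the quadratic ansatz $V^i(t,x)=\langle\Pi^i(t)x,x\rangle_{\mathcal{H}^2}+q^i(t)$ with $\Pi^i(t)\in\mathbb{S}(\mathcal{H}^2)$ fulfills conditions (i)--(iv) of the preceding proposition exactly when the pair $(\Pi^1,\Pi^2)$ solves \eqref{4-8}; the theorem then follows immediately from that proposition. Thus the argument is a verification rather than a construction, and the genuine existence question for solutions of \eqref{4-8} is postponed to Section 3. First I would record the Fr\'echet derivatives of the ansatz: since each $\Pi^i(t)$ is self-adjoint, one has $V^i_x(t,x)=2\Pi^i(t)x$, $V^i_{xx}(t,x)=2\Pi^i(t)$ and $V^i_t(t,x)=\langle\dot\Pi^i(t)x,x\rangle_{\mathcal{H}^2}+\dot q^i(t)$, which is precisely what produces the expression for $\mathcal{L}_{u^1u^2}V^i$ displayed above the theorem. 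Condition (i) is then clear, and choosing the terminal data $\Pi^1(T)=\tfrac12\bar Q_{1f}(I\oplus0)$, $\Pi^2(T)=\tfrac12\bar Q_{2f}(0\oplus I)$ and $q^i(T)=0$ makes condition (ii) hold.

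Next I would carry out the pointwise minimization hidden in (iii) and (iv). Fixing $t,x$ and $\bar u^2$, the right-hand side of (iii) equals, up to terms independent of $u^1$, the quadratic $\langle 2\mathbf{B}_1^*\Pi^1(t)x,u^1\rangle_{\mathcal H}+\tfrac12\langle R_{11}u^1,u^1\rangle_{\mathcal H}$; since $R_{11}>0$ this is strictly convex, so its unique global minimizer is found by annihilating the Gateaux derivative, giving $\bar u^1=-\tfrac{2}{R_{11}}\mathbf{B}_1^*\Pi^1(t)x$, and the same convexity delivers the required inequality in (iii). The analogous computation with $R_{22}>0$ yields $\bar u^2=-\tfrac{2}{R_{22}}\mathbf{B}_2^*\Pi^2(t)x$ together with the inequality in (iv). These are exactly the feedback laws in \eqref{4-6} and \eqref{4-7}.

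It then remains to substitute $\bar u^1,\bar u^2$ into the equality parts of (iii) and (iv) and collect terms. The $\mathbf{B}_1$ contributions $\langle2\Pi^1x,\mathbf{B}_1\bar u^1\rangle_{\mathcal H^2}+\tfrac12\langle R_{11}\bar u^1,\bar u^1\rangle_{\mathcal H}$ combine to $-\langle\tfrac{2}{R_{11}}\Pi^1\mathbf{B}_1\mathbf{B}_1^*\Pi^1x,x\rangle_{\mathcal H^2}$; the mixed term $\langle2\Pi^1x,\mathbf{B}_2\bar u^2\rangle_{\mathcal H^2}=-\tfrac{4}{R_{22}}\langle\Pi^1\mathbf{B}_2\mathbf{B}_2^*\Pi^2x,x\rangle_{\mathcal H^2}$ is split into the two self-adjointly symmetric cross terms of \eqref{4-8} using $\langle\Pi^1\mathbf{B}_2\mathbf{B}_2^*\Pi^2x,x\rangle_{\mathcal H^2}=\langle\Pi^2\mathbf{B}_2\mathbf{B}_2^*\Pi^1x,x\rangle_{\mathcal H^2}$; and the penalty $\tfrac12\langle\varepsilon R_{12}\bar u^2,\bar u^2\rangle_{\mathcal H}$ produces the $\Pi^2\mathbf{B}_2\mathbf{B}_2^*\Pi^2$ term. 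Because the resulting identity of quadratic forms must hold for every $x\in\mathcal H^2$, it reads off as the first operator equation in \eqref{4-8}, and symmetrically the second. The scalar parts decouple into $\dot q^i(t)+\mathrm{tr}[\Pi^i(t)\mathbf{\Sigma}\mathbf{Q}\mathbf{\Sigma}^*]=0$ with $q^i(T)=0$, solved by direct integration once $\Pi^i$ is known. Applying the proposition to these $V^i$ then yields the Nash equilibrium and the value $J^i(\bar u^1,\bar u^2)=V^i(0,x_0)=\langle\Pi^i(0)x_0,x_0\rangle_{\mathcal H^2}+q^i(0)$.

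The main obstacle at this stage is essentially bookkeeping: the only nonautomatic points are justifying that each inner optimization is a genuine global minimum, which is exactly where $R_{ii}>0$ enters through strict convexity, and symmetrizing the cross terms so that the quadratic-form identities can be recognized as operator equations in $\mathbb{S}(\mathcal H^2)$. No analytic difficulty arises here; the substantive work, namely showing that \eqref{4-8} actually admits a solution $(\Pi^1,\Pi^2)$ on the whole interval $[0,T]$, is deferred to the next section.
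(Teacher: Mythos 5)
Your proposal is correct and follows essentially the same route as the paper: the paper likewise inserts the quadratic ansatz $V^i(t,x)=\langle \Pi^i(t)x,x\rangle_{\mathcal{H}^2}+q^i(t)$ into conditions (i)--(iv) of the preceding proposition, obtains the feedback laws and the pointwise minimization from conditions (iii)--(iv) (yielding \eqref{4-6}--\eqref{4-7}), and then reads off \eqref{4-8} by collecting the quadratic-form terms. Your write-up merely makes explicit the convexity justification and the symmetrization of the cross terms, which the paper leaves implicit.
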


\section{Existence of solutions for operator-valued Riccati-type equations}
\qquad To our best knowledge, until now there have been no results ensuring the existence of the solutions of \eqref{4-8} on $[0,T]$. In this section, we will focus on the well-posedness of \eqref{4-8}. Let us begin with the following result.
\begin{lemma}
The space $\mathcal{D}=\{T_1\oplus T_2|T_1,T_2\in\mathbb{S}(\mathcal{H}^2)\}$, endowed with the norm in $\mathcal{L}(\mathcal{H}^4)$, is a Banach space.
\end{lemma}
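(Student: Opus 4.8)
The plan is to exhibit $\mathcal{D}$ as a closed linear subspace of the Banach space $\mathcal{L}(\mathcal{H}^4)$, since every closed subspace of a Banach space is complete in the inherited norm. First I would verify that $\mathcal{D}$ is a subspace: for $T_1\oplus T_2,\, S_1\oplus S_2\in\mathcal{D}$ and scalars $a,b$ one has $a(T_1\oplus T_2)+b(S_1\oplus S_2)=(aT_1+bS_1)\oplus(aT_2+bS_2)$, and $aT_i+bS_i\in\mathbb{S}(\mathcal{H}^2)$ because the self-adjoint operators are closed under (real) linear combinations. The computational heart of the argument is the block-diagonal norm identity: writing a generic element of $\mathcal{H}^4$ as $x=(x_1,x_2)$ with $x_1,x_2\in\mathcal{H}^2$, the estimate $\|(T_1\oplus T_2)x\|^2=\|T_1x_1\|^2+\|T_2x_2\|^2\le \max(\|T_1\|,\|T_2\|)^2\|x\|^2$ together with the test vectors $(x_1,0)$ and $(0,x_2)$ yields
\[
\|T_1\oplus T_2\|_{\mathcal{L}(\mathcal{H}^4)}=\max(\|T_1\|,\|T_2\|).
\]

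Granting this identity, closedness follows readily. Let $\{T_1^{(n)}\oplus T_2^{(n)}\}_n$ be a sequence in $\mathcal{D}$ converging in $\mathcal{L}(\mathcal{H}^4)$ to some $S$. The norm identity forces each coordinate sequence $\{T_i^{(n)}\}_n$ to be Cauchy in $\mathcal{L}(\mathcal{H}^2)$, hence convergent to some $T_i\in\mathcal{L}(\mathcal{H}^2)$, and then $T_1^{(n)}\oplus T_2^{(n)}\to T_1\oplus T_2$, so $S=T_1\oplus T_2$ by uniqueness of limits. The one point genuinely requiring attention---and what I regard as the main (albeit mild) obstacle---is to confirm that the limits $T_i$ are again self-adjoint, so that $S$ actually lies in $\mathcal{D}$. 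This uses that the adjoint map is a linear isometry of $\mathcal{L}(\mathcal{H}^2)$: from $\|(T_i^{(n)})^*-T_i^*\|=\|T_i^{(n)}-T_i\|\to 0$ and $(T_i^{(n)})^*=T_i^{(n)}\to T_i$, uniqueness of limits gives $T_i^*=T_i$, that is, $T_i\in\mathbb{S}(\mathcal{H}^2)$.

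Combining these steps, $\mathcal{D}$ is a closed subspace of $\mathcal{L}(\mathcal{H}^4)$ and is therefore a Banach space. Equivalently, the map $(T_1,T_2)\mapsto T_1\oplus T_2$ is, by the displayed identity, an isometric isomorphism from $\mathbb{S}(\mathcal{H}^2)\times\mathbb{S}(\mathcal{H}^2)$ (equipped with the maximum norm) onto $\mathcal{D}$; since $\mathbb{S}(\mathcal{H}^2)$ is complete by the self-adjointness argument above and a finite product of Banach spaces is again Banach, this gives the completeness of $\mathcal{D}$ directly.
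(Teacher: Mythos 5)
Your proof is correct, and it reaches the conclusion by a somewhat different route than the paper. The paper also argues via closedness, but it does so ``top-down'': it invokes the completeness of $\mathbb{S}(\mathcal{H}^2\oplus\mathcal{H}^2)$ to obtain a self-adjoint limit written in block form $\begin{pmatrix} T_{11} & T_{12} \\ (T_{12})^* & T_{22}\end{pmatrix}$, and then kills the off-diagonal block by testing the convergence on vectors of the form $(u,0)$, obtaining $\|T_{12}^*u\|<\epsilon$ for every unit $u$ and every $\epsilon$; the norm identity $\|T_1\oplus T_2\|_{\mathcal{L}(\mathcal{H}^4)}=\max\{\|T_1\|_{\mathcal{L}(\mathcal{H}^2)},\|T_2\|_{\mathcal{L}(\mathcal{H}^2)}\}$ appears there only as a closing remark, unproved. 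You instead work ``bottom-up'': you prove that norm identity first and make it the engine of the argument, deducing that the coordinate sequences are Cauchy in $\mathcal{L}(\mathcal{H}^2)$, assembling the limit directly in block-diagonal form (so the off-diagonal issue never arises), and then verifying self-adjointness of the coordinate limits via the isometry of the adjoint map --- a point the paper leaves implicit inside its appeal to completeness of $\mathbb{S}(\mathcal{H}^2\oplus\mathcal{H}^2)$. What your approach buys: it is self-contained modulo completeness of $\mathcal{L}(\mathcal{H}^2)$, and it establishes, rather than merely asserts, the isometric identification of $\mathcal{D}$ with $\mathbb{S}(\mathcal{H}^2)\times\mathbb{S}(\mathcal{H}^2)$ under the maximum norm, which is exactly the formula the paper needs later. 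What the paper's approach buys: brevity, since it treats the closedness of the self-adjoint operators as a known fact and only has to check one thing, namely $T_{12}=0$. One small point to keep in mind in your subspace step: as you correctly flag, $\mathbb{S}(\mathcal{H}^2)$ is only a \emph{real} linear subspace, so $\mathcal{D}$ is a Banach space over $\mathbb{R}$; this is consistent with how the paper uses it.
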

\begin{proof}
Obviously, the set $\mathcal{D}$ is closed under linear operations. Let $\{T_1^n\oplus T_2^n\}\subset \mathcal{D}$ be a convergent sequence. Since the space $\mathbb{S}(\mathcal{H}^2\oplus \mathcal{H}^2)$ is complete, there exists an element $T=\begin{pmatrix} T_{11} & T_{12} \\ (T_{12})^* & T_{22} \end{pmatrix}\in\mathbb{S}(\mathcal{H}^2\oplus\mathcal{H}^2)$ with $T_{11},T_{22}\in\mathbb{S}(\mathcal{H}^2)$ and $T_{12}\in\mathcal{L}(\mathcal{H}^2)$, such that
\begin{equation}\label{unknown6}
  \lim_{n\rightarrow\infty}\|T_1^n\oplus T_2^n-T\|_{\mathcal{L}(\mathcal{H}^4)}=0.
\end{equation}
The goal is to show that $T\in\mathcal{D}$, i.e., $T_{12}=0$. From \eqref{unknown6}, for any $\epsilon>0$, $\exists N_0$, such that for $n>N_0$, one has $\|T_1^n\oplus T_2^n-T\|_{\mathcal{L}(\mathcal{H}^4)}<\epsilon$. Then, for $n>N_0$, and any $\|u\|_{\mathcal{H}^2}=1$,
\begin{align*}
  \|(T_1^n-T_{11})u\|_{\mathcal{H}^2}^2+\|T_{12}^*u\|_{\mathcal{H}^2}^2=\|(T_1^n\oplus T_2^n-T)(u,0)\|_{\mathcal{H}^4}^2<\epsilon^2,
\end{align*}
which means $T_{12}=0$. Thus, we have $T\in\mathcal{D}$, which yields that $\mathcal{D}$ is a closed subspace of $\mathbb{S}(\mathcal{H}^2\oplus\mathcal{H}^2)$. Since $\mathbb{S}(\mathcal{H}^2\oplus\mathcal{H}^2)$ is a Banach space, one has that $\mathcal{D}$ is a Banach space with the norm inherited from $\mathcal{L}(\mathcal{H}^4)$. In addition, it is easy to see that, in subspace $\mathcal{D}$, the norm of $\mathcal{L}(\mathcal{H}^4)$ is equal to $\|T_1\oplus T_2\|_\mathcal{D}=\max\{\|T_1\|_{\mathcal{L}(\mathcal{H}^2)},\|T_2\|_{\mathcal{L}(\mathcal{H}^2)}\}$.
\end{proof}
Set
\begin{equation*}
\mathbf{\Pi}(t)=\begin{pmatrix} \Pi^1(t) & \\ & \Pi^2(t) \end{pmatrix},\ \mathbf{K}_\varepsilon=\begin{pmatrix} \mathbf{A}_\varepsilon & \\ & \mathbf{A}_\varepsilon \end{pmatrix},\ \mathbf{S}=\begin{pmatrix} \frac{2}{R_{11}}\mathbf{B}_1\mathbf{B}_1^* & \\ & \frac{2}{R_{22}}\mathbf{B}_2\mathbf{B}_2^* \end{pmatrix},
\end{equation*}
\begin{equation*}
  \mathbf{S}_0=\begin{pmatrix} \frac{2R_{12}}{(R_{22})^2}\mathbf{B}_2\mathbf{B}_2^* & \\ & \frac{2R_{21}}{(R_{11})^2}\mathbf{B}_1\mathbf{B}_1^* \end{pmatrix},\ \mathbf{I}_{2\times 2}=\begin{pmatrix} I & \\ & I \end{pmatrix},\ \mathbf{J}=\begin{pmatrix}  & \mathbf{I}_{2\times 2} \\ \mathbf{I}_{2\times 2} & \end{pmatrix},
\end{equation*}
\begin{equation*}
  \bar{\mathbf{Q}}=\begin{pmatrix} \frac{1}{2}\bar{Q}_1\big((I-\Gamma_1M_1)^2\oplus 0\big) & \\ & \frac{1}{2}\bar{Q}_2\big(0\oplus(I-\Gamma_2M_2)^2\big) \end{pmatrix},
\ \mathbf{G}=\begin{pmatrix} \frac{1}{2}\bar{Q}_{1f}(I\oplus 0) & \\ & \frac{1}{2}\bar{Q}_{2f}(0\oplus I) \end{pmatrix}.
\end{equation*}
Then it holds that
\begin{equation*}
  \mathbf{K}_\varepsilon,\mathbf{S},\mathbf{S}_0,\mathbf{J},\bar{\mathbf{Q}},\mathbf{G}\in\mathcal{L}(\mathcal{H}^4).
\end{equation*}
Clearly,  we can check that $\mathbf{\Pi}(\cdot)$ satisfies \eqref{4-8} if $\mathbf{\Pi}(t)\in\mathcal{D}$ for $t\in [0,T]$ and $\mathbf{\Pi}(\cdot)$ satisfies the following equation
\begin{equation}\label{4-9}
\left\{
  \begin{split}
    & 0=\frac{d}{dt}\mathbf{\Pi}+\mathbf{\Pi}\mathbf{K}_\varepsilon+\mathbf{K}_\varepsilon^*\mathbf{\Pi}-\mathbf{\Pi}\mathbf{S}\mathbf{\Pi}-\mathbf{\Pi}\mathbf{J}\mathbf{S}\mathbf{\Pi}\mathbf{J}-\mathbf{J}\mathbf{\Pi}\mathbf{S}\mathbf{J}\mathbf{\Pi}+\varepsilon\mathbf{J}\mathbf{\Pi}\mathbf{J}\mathbf{S}_0\mathbf{J}\mathbf{\Pi}\mathbf{J}+\bar{\mathbf{Q}},\\
    &\mathbf{\Pi}(T)=\mathbf{G}.
  \end{split}
  \right.
\end{equation}

Noting that $\mathbf{A}_\varepsilon$ is bounded, we only need to study the uniform solution of \eqref{4-9} here.
When $\varepsilon=1$, we have the following result.
\begin{theorem}\label{th3.1}
Let $r=2\|G\|_{\mathcal{L}(\mathcal{H}^4)}$, $\alpha\in (0,1)$, and $\tau\in (0,+\infty)$ such that
$$
\tau\big(2r\|K_1\|_{\mathcal{L}(\mathcal{H}^4)}+r^2\|\mathbf{S}\|_{\mathcal{L}(\mathcal{H}^4)}+2r^2\|\mathbf{J}\|_{\mathcal{L}(\mathcal{H}^4)}^2\|\mathbf{S}\|_{\mathcal{L}(\mathcal{H}^4)}+r^2\|\mathbf{J}\|_{\mathcal{L}(\mathcal{H}^4)}^4\|\mathbf{S}_0\|_{\mathcal{L}(\mathcal{H}^4)}\\
+\|\bar{\mathbf{Q}}\|_{\mathcal{L}(\mathcal{H}^4)}\big)\leq \|G\|_{\mathcal{L}(\mathcal{H}^4)}
$$
and
$$
\tau\big(2\|K_1\|_{\mathcal{L}(\mathcal{H}^4)}+2r\|\mathbf{S}\|_{\mathcal{L}(\mathcal{H}^4)}
+4r\|\mathbf{J}\|_{\mathcal{L}(\mathcal{H}^4)}^2\|\mathbf{S}\|_{\mathcal{L}(\mathcal{H}^4)}
+2r\|\mathbf{J}\|_{\mathcal{L}(\mathcal{H}^4)}^4\|\mathbf{S}_0\|_{\mathcal{L}(\mathcal{H}^4)}\big)\leq\alpha.
$$
Then \eqref{4-9} has unique solution in $B_{r,\tau}=\{F\in C^1([T-\tau,T];\mathcal{D}): \|F\|_{\mathcal{L}(\mathcal{H}^4)}\leq r,\forall t\in [T-\tau,T]\}$.
\end{theorem}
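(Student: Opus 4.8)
The plan is to recast the backward terminal-value problem \eqref{4-9} (with $\varepsilon=1$) as a fixed-point equation and apply the Banach contraction principle on the closed ball $B_{r,\tau}$, which is a complete metric space since it is a closed subset of the Banach space $C([T-\tau,T];\mathcal{D})$ (the completeness of $\mathcal{D}$ being the content of the lemma above). Integrating \eqref{4-9} from $t$ to $T$ and using $\mathbf{\Pi}(T)=\mathbf{G}$, I would define the map $\mathcal{T}$ by
\[
(\mathcal{T}\mathbf{\Pi})(t)=\mathbf{G}+\int_t^T\Big[\mathbf{\Pi}\mathbf{K}_1+\mathbf{K}_1^*\mathbf{\Pi}-\mathbf{\Pi}\mathbf{S}\mathbf{\Pi}-\mathbf{\Pi}\mathbf{J}\mathbf{S}\mathbf{\Pi}\mathbf{J}-\mathbf{J}\mathbf{\Pi}\mathbf{S}\mathbf{J}\mathbf{\Pi}+\mathbf{J}\mathbf{\Pi}\mathbf{J}\mathbf{S}_0\mathbf{J}\mathbf{\Pi}\mathbf{J}+\bar{\mathbf{Q}}\Big](s)\,ds,
\]
where $\mathbf{K}_1$ denotes $\mathbf{K}_\varepsilon$ at $\varepsilon=1$. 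A continuous fixed point of $\mathcal{T}$ is automatically $C^1$ (the integrand being continuous) and solves \eqref{4-9}, and conversely every solution is a fixed point; so it suffices to show $\mathcal{T}$ is a well-defined contraction of $B_{r,\tau}$ into itself.

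For well-definedness I would first check that $\mathcal{T}$ sends $\mathcal{D}$-valued functions into $\mathcal{D}$-valued functions. Since $\mathbf{S},\mathbf{S}_0,\bar{\mathbf{Q}},\mathbf{G}$ are block-diagonal with self-adjoint $\mathbb{S}(\mathcal{H}^2)$-blocks and $\mathbf{J}$ merely swaps the two $\mathcal{H}^2$-blocks, a direct block computation shows every summand of the integrand is again block diagonal. Its diagonal blocks are self-adjoint because the linear part $\mathbf{\Pi}\mathbf{K}_1+\mathbf{K}_1^*\mathbf{\Pi}$ and the coupling pair $-\mathbf{\Pi}\mathbf{J}\mathbf{S}\mathbf{\Pi}\mathbf{J}-\mathbf{J}\mathbf{\Pi}\mathbf{S}\mathbf{J}\mathbf{\Pi}$ contribute blocks of the form $X+X^*$, while the remaining quadratic summands $\mathbf{\Pi}\mathbf{S}\mathbf{\Pi}$ and $\mathbf{J}\mathbf{\Pi}\mathbf{J}\mathbf{S}_0\mathbf{J}\mathbf{\Pi}\mathbf{J}$ have the self-adjoint form $Y^*ZY$ with $Z$ self-adjoint. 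Hence the integrand lies in $\mathcal{D}$, and as $\mathcal{D}$ is closed the integral remains in $\mathcal{D}$.

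Next I would verify the self-mapping and contraction bounds, which are where the two displayed hypotheses are used. For $\mathbf{\Pi}\in B_{r,\tau}$, bounding each integrand term in $\mathcal{L}(\mathcal{H}^4)$ via $\|\mathbf{\Pi}(s)\|\le r$ and submultiplicativity yields the terms $2r\|\mathbf{K}_1\|$, $r^2\|\mathbf{S}\|$, $2r^2\|\mathbf{J}\|^2\|\mathbf{S}\|$, $r^2\|\mathbf{J}\|^4\|\mathbf{S}_0\|$, and $\|\bar{\mathbf{Q}}\|$; multiplying by the interval length $\le\tau$ and adding $\|\mathbf{G}\|$, the first hypothesis gives exactly $\|\mathcal{T}\mathbf{\Pi}\|\le 2\|\mathbf{G}\|=r$. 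For two elements $\mathbf{\Pi}_1,\mathbf{\Pi}_2\in B_{r,\tau}$, telescoping each quadratic difference (for instance $\mathbf{\Pi}_1\mathbf{S}\mathbf{\Pi}_1-\mathbf{\Pi}_2\mathbf{S}\mathbf{\Pi}_2=(\mathbf{\Pi}_1-\mathbf{\Pi}_2)\mathbf{S}\mathbf{\Pi}_1+\mathbf{\Pi}_2\mathbf{S}(\mathbf{\Pi}_1-\mathbf{\Pi}_2)$) produces the Lipschitz constant $2\|\mathbf{K}_1\|+2r\|\mathbf{S}\|+4r\|\mathbf{J}\|^2\|\mathbf{S}\|+2r\|\mathbf{J}\|^4\|\mathbf{S}_0\|$ for the integrand; multiplying by $\tau$, the second hypothesis gives $\|\mathcal{T}\mathbf{\Pi}_1-\mathcal{T}\mathbf{\Pi}_2\|\le\alpha\|\mathbf{\Pi}_1-\mathbf{\Pi}_2\|$ with $\alpha<1$. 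The Banach fixed-point theorem then furnishes a unique fixed point in $B_{r,\tau}$, i.e. a unique solution of \eqref{4-9}.

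The step I expect to be the main obstacle is the well-definedness/invariance claim, namely confirming that the coupling terms built from $\mathbf{J}$ keep the image block-diagonal with self-adjoint blocks so that the iteration never leaves $\mathcal{D}$; the norm and Lipschitz estimates themselves are routine once one exploits that all coefficient operators are bounded and applies the triangle inequality and submultiplicativity.
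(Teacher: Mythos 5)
Your proposal is correct and follows essentially the same route as the paper: the paper omits the argument and simply cites \cite[Th 2.1]{guo}, which is exactly the Picard--Banach contraction-mapping local existence theorem that you carry out explicitly (integral reformulation, invariance of the ball via the first hypothesis, contraction via the second). Your block computation confirming that the $\mathbf{J}$-coupled terms keep the integrand in $\mathcal{D}$ is a detail the paper leaves implicit, and it checks out: the pair $\mathbf{\Pi}\mathbf{J}\mathbf{S}\mathbf{\Pi}\mathbf{J}+\mathbf{J}\mathbf{\Pi}\mathbf{S}\mathbf{J}\mathbf{\Pi}$ has diagonal blocks of the form $X+X^*$ and the $\mathbf{S}_0$ term has blocks $\Pi^j S_0 \Pi^j$, all self-adjoint.
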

\begin{proof}
The proof is similar to the one of \cite[Th 2.1]{guo} and so we omit it here.
\end{proof}
\begin{remark}
Theorem \ref{th3.1} shows that equation \eqref{4-9} admits a unique solution in a sufficiently small time interval when $\varepsilon=1$.
\end{remark}

For sufficiently small $\varepsilon$, we can obtain the following result.
\begin{theorem}\label{existencethm}
There exists $\varepsilon_0>0$, such that for $\varepsilon\in [-\varepsilon_0,\varepsilon_0]$, equation \eqref{4-9} admits a solution in $C^1([0,T];\mathcal{D})$.
\end{theorem}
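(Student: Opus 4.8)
The plan is to treat \eqref{4-9} as a backward Riccati-type ODE in the Banach space $\mathcal{D}$ and to build its solution by perturbing off the decoupled case $\varepsilon=0$, where a global solution on the whole interval $[0,T]$ is available from classical linear-quadratic theory. Concretely, I would first solve \eqref{4-9} at $\varepsilon=0$, record a uniform bound for that solution on $[0,T]$, and then show that for $|\varepsilon|$ small the genuine solution stays within a fixed neighbourhood of the $\varepsilon=0$ solution for all $t\in[0,T]$, so that no finite-time blow-up occurs.

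\emph{Step 1 (global solvability at $\varepsilon=0$).} When $\varepsilon=0$ the operator $\mathbf{A}_0$ is block diagonal, and I would seek a solution of the special form $\Pi^1(t)=P_1(t)\oplus 0$, $\Pi^2(t)=0\oplus P_2(t)$ with $P_1,P_2\in\mathbb{S}(\mathcal{H})$, which is consistent with the terminal data $\mathbf{\Pi}(T)=\mathbf{G}$. Because $\mathbf{B}_1\mathbf{B}_1^*$ and $\mathbf{B}_2\mathbf{B}_2^*$ are supported on complementary blocks, every term of \eqref{4-8} that mixes $\Pi^1$ and $\Pi^2$ — the products $\Pi^1\mathbf{B}_2\mathbf{B}_2^*\Pi^2$, $\Pi^2\mathbf{B}_2\mathbf{B}_2^*\Pi^1$, $\Pi^2\mathbf{B}_1\mathbf{B}_1^*\Pi^1$, $\Pi^1\mathbf{B}_1\mathbf{B}_1^*\Pi^2$ — vanishes identically on operators of this form. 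The system therefore reduces to two uncoupled standard operator Riccati equations for $P_1$ and $P_2$, each with bounded coefficients, nonnegative running and terminal weights, and coercive control penalty $R_{ii}>0$. By the classical theory of such equations (see, e.g., \cite{Ichikawa} and the references therein) each admits a unique, nonnegative, globally defined solution on $[0,T]$. This yields $\mathbf{\Pi}_0\in C^1([0,T];\mathcal{D})$ solving \eqref{4-9} with $\varepsilon=0$; set $M_0=\sup_{t\in[0,T]}\|\mathbf{\Pi}_0(t)\|_{\mathcal{L}(\mathcal{H}^4)}<\infty$.

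\emph{Step 2 (perturbation and fixed point on the full interval).} Writing $\mathbf{K}_\varepsilon=\mathbf{K}_0+\varepsilon\mathbf{K}'$ with $\mathbf{K}'\in\mathcal{L}(\mathcal{H}^4)$, I would set $\mathbf{\Pi}_\varepsilon=\mathbf{\Pi}_0+\mathbf{\Delta}$ and substitute into the integrated form of \eqref{4-9}, $\mathbf{\Pi}(t)=\mathbf{G}+\int_t^{T}\mathcal{G}(s,\mathbf{\Pi}(s),\varepsilon)\,ds$, where $\mathcal{G}$ denotes the quadratic vector field on the right-hand side of \eqref{4-9}. Since $\mathbf{\Pi}_0$ solves the $\varepsilon=0$ equation, the relation for $\mathbf{\Delta}$ becomes $\mathbf{\Delta}(t)=\int_t^{T}\big(\mathcal{A}(s)[\mathbf{\Delta}(s)]+\mathcal{N}(\mathbf{\Delta}(s))+\varepsilon\,\mathcal{R}(s,\mathbf{\Delta}(s))\big)\,ds$ with $\mathbf{\Delta}(T)=0$; here $\mathcal{A}(s)[\cdot]$ is bounded and linear with coefficients controlled by $M_0$, $\mathcal{N}$ gathers the purely quadratic terms ($\mathbf{\Delta}\mathbf{S}\mathbf{\Delta}$, $\mathbf{\Delta}\mathbf{J}\mathbf{S}\mathbf{\Delta}\mathbf{J}$, and so on), and $\varepsilon\mathcal{R}$ collects every term carrying an explicit factor $\varepsilon$ (from $\mathbf{K}_\varepsilon-\mathbf{K}_0$ and from $\varepsilon\mathbf{J}\mathbf{\Pi}\mathbf{J}\mathbf{S}_0\mathbf{J}\mathbf{\Pi}\mathbf{J}$), which is bounded on bounded sets. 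On the ball $\bar B_\rho=\{\mathbf{\Delta}\in C([0,T];\mathcal{D}):\mathbf{\Delta}(T)=0,\ \|\mathbf{\Delta}\|_\infty\le\rho\}$ I would run a contraction argument in the weighted norm $\|\mathbf{\Delta}\|_\lambda=\sup_{t}e^{\lambda(T-t)}\|\mathbf{\Delta}(t)\|_{\mathcal{L}(\mathcal{H}^4)}$: taking $\lambda$ large makes the linear part a strict contraction uniformly in $T$, taking $\rho$ small controls the quadratic part $\mathcal{N}$, and taking $|\varepsilon|\le\varepsilon_0$ small makes the forcing so small that the map sends $\bar B_\rho$ into itself. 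The Banach fixed point theorem then produces $\mathbf{\Delta}_\varepsilon\in\bar B_\rho$ on all of $[0,T]$, and from the integral equation $\mathbf{\Pi}_\varepsilon=\mathbf{\Pi}_0+\mathbf{\Delta}_\varepsilon\in C^1([0,T];\mathcal{D})$ solves \eqref{4-9}. A direct check that $\mathcal{G}$ maps $\mathcal{D}$ into $\mathcal{D}$ (the mixed products recombine into symmetric block-diagonal operators, exactly as in the passage from \eqref{4-8} to \eqref{4-9}) guarantees $\mathbf{\Pi}_\varepsilon(t)\in\mathcal{D}$ for every $t$, so that by Lemma 3.1 and the stated equivalence $(\Pi^1,\Pi^2)$ solves \eqref{4-8}.

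\emph{Main obstacle.} The quadratic nonlinearity is the crux: unlike the local result of Theorem \ref{th3.1}, here the horizon $[0,T]$ is fixed and possibly large, so a naïve contraction fails because the Lipschitz constant of the integral operator grows with $T$, and, worse, a generic quadratic Riccati equation can blow up in finite time. The two devices that overcome this are (i) anchoring the argument at the globally solvable case $\varepsilon=0$, so that the unknown is the small correction $\mathbf{\Delta}$ rather than $\mathbf{\Pi}_\varepsilon$ itself, and (ii) the exponentially weighted norm (equivalently, a Gronwall estimate), which renders the linear part contractive independently of $T$. The threshold $\varepsilon_0$ must then be chosen in terms of $M_0$, $\rho$, $\lambda$ and the operator norms of $\mathbf{S},\mathbf{S}_0,\mathbf{J},\mathbf{K}'$; equivalently, one may phrase Step 2 as a continuation argument in $t$ together with the a priori bound $\|\mathbf{\Pi}_\varepsilon(t)\|_{\mathcal{L}(\mathcal{H}^4)}\le M_0+\rho$, which rules out the blow-up alternative on $[0,T]$.
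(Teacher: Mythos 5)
Your overall architecture coincides with the paper's. Step 1 — the block-diagonal ansatz $\Pi^1=P_1\oplus 0$, $\Pi^2=0\oplus P_2$, under which all mixed products vanish and \eqref{4-9} at $\varepsilon=0$ splits into two decoupled classical operator Riccati equations, globally solvable on $[0,T]$ — is exactly the paper's construction of its particular solution $\bar{\mathbf{\Pi}}$. Your Step 2 philosophy (perturb off this global $\varepsilon=0$ solution so that only a small correction must be controlled) is precisely what the paper isolates as Theorem \ref{tuiguang}, a Banach-space version of Tikhonov's continuity-in-parameter theorem, which it proves by local existence, maximal left-continuation, and the Gronwall bound $\|\Lambda(t,\varepsilon)\|\le T\omega(\varepsilon)e^{NT}$ with $\omega(\varepsilon)\to 0$, so that for small $|\varepsilon|$ the correction cannot reach the boundary of the ball and the solution extends to all of $[0,T]$.

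However, your primary implementation of Step 2 — the contraction on the sup-norm ball $\bar B_\rho$ — has a genuine gap, in two places. First, the weight has the wrong sign: for a terminal-value integral equation $\mathbf{\Delta}(t)=\int_t^T(\cdots)\,ds$ the weight must be $e^{-\lambda(T-t)}$; with your $e^{\lambda(T-t)}$ the linear part contributes a factor $L\,(e^{\lambda T}-1)/\lambda$ to the Lipschitz constant, which grows rather than decays as $\lambda\to\infty$. Second, and more seriously, invariance of $\bar B_\rho$ fails: the linearized coefficient $\mathcal{A}(s)$ is built from $\mathbf{K}_0$, $\mathbf{S}\mathbf{\Pi}_0$, $\mathbf{J}\mathbf{S}\mathbf{\Pi}_0\mathbf{J}$, etc., so its norm $L_A$ is $O(1)$, not $O(\varepsilon)$; for $\mathbf{\Delta}\in\bar B_\rho$ one only gets $\|(\mathcal{T}\mathbf{\Delta})(t)\|\le T\bigl(L_A\rho+C\rho^2+|\varepsilon|C_R\bigr)$, where $C$ bounds the quadratic terms and $C_R$ the forcing, and the term $TL_A\rho$ already exceeds $\rho$ once $TL_A>1$, no matter how small $\rho$ and $\varepsilon$ are; generically the linear part alone expands the sup-norm ball. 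The weighted norm repairs the contraction constant but not the invariance of a ball defined by the sup norm, so "small $\varepsilon$ makes the map send $\bar B_\rho$ into itself" is not justified. To close the argument you must either (a) define the ball in the weighted norm itself, accepting that its radius and hence $\varepsilon_0$ shrink like $e^{-\lambda T}$; (b) eliminate the linear part from the fixed-point map by variation of constants along the evolution family generated by $\mathcal{A}(\cdot)$; or (c) abandon the fixed point in favor of the continuation-plus-Gronwall argument — which is exactly the "equivalent" phrasing in your final sentence, and is how the paper actually proves the theorem. With repair (c) your proof becomes the paper's; as literally written, the fixed-point step does not go through.
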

Before proving this theorem, we first present another result, which is a parallel extension of \cite[Th 7.2]{Tikhonov} to the Banach space setting.
\begin{theorem}\label{tuiguang}
Let $X$ be a Banach space, $G=\{(y,t,\varepsilon)|\|y\|_X<b,t\in \mathbb{R},|\varepsilon|\leq\bar{\varepsilon}\}$, function $f(y,t,\varepsilon):G\rightarrow X$ be continuous with respect to its variables, and satisfy the Lipschitz condition
\begin{equation}\label{4-10}
  \|f(y_1,t,\varepsilon)-f(y_2,t,\varepsilon)\|_X\leq N\|y_1-y_2\|_X,
\end{equation}
where $N$ is the same constant for all $\varepsilon\in [-\bar{\varepsilon},\bar{\varepsilon}]$ and $t\in\mathbb{R}$. Assume that the following initial value problem
\begin{equation*}
  \frac{dy}{dt}=f(y,t,0),\quad y(T)=y_T\in X,
\end{equation*}
admits a solution $\bar{y}(t)\in D=\{(y,t)| \|y\|_X<b, 0\leq t\leq T\}$ on the interval $[0,T]$. Then there exists sufficiently small $\varepsilon_0>0$ such that, for $\varepsilon\in [-\varepsilon_0,\varepsilon_0]$, the following equation
\begin{equation*}
  \frac{dy}{dt}=f(y,t,\varepsilon),\quad y(T,\varepsilon)=y_T\in X,
\end{equation*}
also admits a solution $y(t,\varepsilon)\in D$ on $[0,T]$.
\end{theorem}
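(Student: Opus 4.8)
The plan is to read this as a continuous-dependence-on-a-parameter statement and prove it by combining Picard--Lindel\"of local existence in the Banach space $X$ with a Gr\"onwall comparison between the perturbed and unperturbed trajectories, and then to upgrade the resulting local solution to a global one on all of $[0,T]$ by a continuation argument. It is convenient to work with the integral form of the terminal value problem, solved backward from $T$: writing $y(t,\varepsilon)=y_T-\int_t^T f(y(s,\varepsilon),s,\varepsilon)\,ds$ and $\bar y(t)=y_T-\int_t^T f(\bar y(s),s,0)\,ds$. Local existence and uniqueness of $y(\cdot,\varepsilon)$ near $t=T$ follow from the contraction mapping theorem applied to this integral operator, since $f$ is continuous and satisfies \eqref{4-10} with the same $N$ for all $\varepsilon$.

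I would first extract a safety margin from the hypothesis that $\bar y$ stays in the open ball. Since $[0,T]$ is compact and $\bar y$ is continuous, $\sup_{t\in[0,T]}\|\bar y(t)\|_X=b-2\delta$ for some $\delta>0$, so $\bar y$ is uniformly bounded away from the boundary $\{\|y\|_X=b\}$. Next I would record that $\omega(\varepsilon):=\sup_{s\in[0,T]}\|f(\bar y(s),s,\varepsilon)-f(\bar y(s),s,0)\|_X\to 0$ as $\varepsilon\to0$. This is because $(s,\varepsilon)\mapsto f(\bar y(s),s,\varepsilon)$ is a composition of continuous maps, hence a continuous map from the compact set $[0,T]\times[-\bar\varepsilon,\bar\varepsilon]$ into $X$, hence uniformly continuous; taking $\varepsilon'=0$ and $s'=s$ in the uniform-continuity estimate gives the required smallness uniformly in $s$.

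The comparison step is the core of the proof. On any interval $(t^\ast,T]$ on which a solution $y(\cdot,\varepsilon)$ of the perturbed problem exists and remains in the open ball, I would subtract the two integral equations, split the integrand as $f(y(s,\varepsilon),s,\varepsilon)-f(\bar y(s),s,0)=[f(y(s,\varepsilon),s,\varepsilon)-f(\bar y(s),s,\varepsilon)]+[f(\bar y(s),s,\varepsilon)-f(\bar y(s),s,0)]$, bound the first bracket by $N\|y(s,\varepsilon)-\bar y(s)\|_X$ via \eqref{4-10} and the second by $\omega(\varepsilon)$, and obtain
\[
\|y(t,\varepsilon)-\bar y(t)\|_X\le \omega(\varepsilon)T+N\int_t^T\|y(s,\varepsilon)-\bar y(s)\|_X\,ds.
\]
The backward Gr\"onwall inequality then gives $\|y(t,\varepsilon)-\bar y(t)\|_X\le \omega(\varepsilon)\,T\,e^{NT}$ throughout $(t^\ast,T]$. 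Choosing $\varepsilon_0>0$ so small that $\omega(\varepsilon)\,T\,e^{NT}\le\delta$ for all $|\varepsilon|\le\varepsilon_0$ forces $\|y(t,\varepsilon)\|_X\le\|\bar y(t)\|_X+\delta\le(b-2\delta)+\delta=b-\delta<b$, so the perturbed trajectory also stays strictly inside the ball on its whole interval of existence.

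Finally I would close the argument by continuation. Let $(t^\ast,T]$ with $t^\ast\ge0$ be the maximal backward interval on which $y(\cdot,\varepsilon)$ exists inside the open ball. If $t^\ast>0$, the a priori bound just established keeps $y(\cdot,\varepsilon)$ in the closed ball of radius $b-\delta$; since $\|f(y,t,\varepsilon)\|_X\le\|f(\bar y(t),t,\varepsilon)\|_X+N\|y-\bar y(t)\|_X$ and the first term is bounded by continuity on the compact $[0,T]\times[-\bar\varepsilon,\bar\varepsilon]$, $f$ is bounded along the trajectory, hence $y(\cdot,\varepsilon)$ is Lipschitz in $t$ and extends continuously to $t^\ast$ with $y(t^\ast,\varepsilon)$ still interior; re-applying local existence at $(y(t^\ast,\varepsilon),t^\ast)$ extends the solution past $t^\ast$, contradicting maximality. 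Thus $t^\ast=0$ and $y(\cdot,\varepsilon)\in D$ on all of $[0,T]$. I expect the main obstacle to be the apparent circularity -- the Gr\"onwall estimate presupposes that the perturbed solution exists and remains in the ball, which is precisely what we wish to conclude -- together with the fact that closed balls in an infinite-dimensional $X$ are not compact, so the continuation step cannot rely on compactness and must instead use the global Lipschitz bound on $f$ to rule out escape to the boundary. The margin $\delta$ and the uniform decay of $\omega(\varepsilon)$ are exactly what break this circularity.
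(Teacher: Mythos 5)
Your proposal is correct and follows essentially the same route as the paper's proof: the same uniform-continuity argument giving $\omega(\varepsilon)\to 0$, the same splitting of the integrand and backward Gronwall bound $T\omega(\varepsilon)e^{NT}$ measured against a safety margin, and the same contradiction on a maximal backward interval of existence. The only differences are organizational: the paper passes to the difference variable $\Lambda=y-\bar y$ and invokes the cited results of \cite{guo} for local existence and for the fact that a maximal solution must reach the boundary of its region (whence the contradiction with $\|\bar{\Lambda}(\varepsilon)\|_X=C$), whereas you keep the variable $y$, prove the continuation step by hand via the Lipschitz-in-$t$ bound, and derive the contradiction by re-applying local existence at the strictly interior endpoint.
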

\begin{proof}
For any given $\varepsilon\in [-\bar{\varepsilon},\bar{\varepsilon}]$, consider the following initial value problem
\begin{equation}\label{4-11}
  \frac{d\Lambda}{dt}=f(\bar{y}+\Lambda,t,\varepsilon)-f(\bar{y},t,\varepsilon)+f(\bar{y},t,\varepsilon)-f(\bar{y},t,0),\quad \Lambda(T,\varepsilon)=0,
\end{equation}
and the following domain of variables $(\Lambda,t)$: $\widetilde{D}=\{(\Lambda,t)|\|\Lambda\|_X<C, 0\leq t\leq T\}$, with $C=b-\beta$, $\beta=\sup_{t\in [0,T]}\|\bar{y}(t)\|_X$. It is easy to check that $\beta<b$. For $\|\Lambda(t,\varepsilon)\|_X<C$, one has $\|\bar{y}(t)+\Lambda(t,\varepsilon)\|_X<b$, i.e., the variables $(\bar{y}+\Lambda,t)$ of $f(\bar{y}+\Lambda,t,\varepsilon)$ belong to the domain $D$.

Since $f(y,t,\varepsilon)$ is continuous in $G$, one has that $F(t,\varepsilon)\triangleq \|f(\bar{y}(t),t,\varepsilon)-f(\bar{y}(t),t,0)\|_X$ is continuous in $[0,T]\times [-\bar{\varepsilon},\bar{\varepsilon}]$, which implies that $F(t,\varepsilon)$ is uniformly continuous on $[0,T]\times [-\bar{\varepsilon},\bar{\varepsilon}]$. Then, for any $\epsilon>0$, there exists $\delta>0$, such that for $|\varepsilon|<\delta$, and all $t\in [0,T]$, we have
\begin{equation*}
  |F(t,\varepsilon)-F(t,0)|=\|f(\bar{y}(t),t,\varepsilon)-f(\bar{y}(t),t,0)\|_X<\epsilon,
\end{equation*}
which leads to
$$
\omega(\varepsilon)\triangleq \sup_{t\in [0,T]}\|f(\bar{y}(t),t,\varepsilon)-f(\bar{y}(t),t,0)\|_X\leq\epsilon.
$$
Thus, one has $\omega(\varepsilon)\rightarrow 0$ as $\varepsilon\rightarrow 0$.

For any given $\varepsilon\in [-\bar{\varepsilon},\bar{\varepsilon}]$, set $h(\Lambda,t,\varepsilon)=f(\bar{y}(t)+\Lambda,t,\varepsilon)-f(\bar{y}(t),t,0)$. By \eqref{4-10}-\eqref{4-11}, it is easy to see that $\Lambda\mapsto h(\Lambda,t,\varepsilon)$ is Lipschitz continuous and $h(\cdot,\cdot,\varepsilon)$ is continuous on $\widetilde{D}$. Then, by \cite[Th 2.1]{guo}, for any given constant $0<C_1<C$, the problem \eqref{4-11} admits a unique solution $\Lambda(t,\varepsilon)$ on a certain segment $[T-H,T]$, which satisfies $\|\Lambda\|_X\leq C_1$. Denote $\widetilde{\widetilde{D}}=\{(\Lambda,t)|\|\Lambda\|_X<C,t\in \mathbb{R}\}$,
\begin{equation*}
  \bar{\bar{y}}(t)=\left\{\begin{split}
                     & \bar{y}(0),\quad & t<0,\\
                     & \bar{y}(t),\quad & 0\leq t\leq T,\\
                     & y_T,\quad & t>T,
                   \end{split}\right.
\end{equation*}
and $\bar{h}(\Lambda,t,\varepsilon)=f(\bar{\bar{y}}(t)+\Lambda,t,\varepsilon)-f(\bar{\bar{y}}(t),t,0)$, respectively. Obviously, one has that $h=\bar{h}$ on $\widetilde{D}$. For the following problem
\begin{equation}\label{5.3.5a}
  \frac{d\Lambda}{dt}=\bar{h}(\Lambda,t,\varepsilon),\quad \Lambda(T,\varepsilon)=0,
\end{equation}
it is easy to see that $\bar{h}(\cdot,\cdot,\varepsilon)$ is continuous and $\Lambda\mapsto \bar{h}(\Lambda,t,\varepsilon)$ is Lipschitz continuous on $\widetilde{\widetilde{D}}$. By \cite[Th 2.2]{guo}, the solution of problem \eqref{5.3.5a} has a unique left continuation to a maximal interval $(T-\bar{H},T]$. Suppose that $\bar{H}\leq T$. Now we show that the limit $\lim\limits_{t\rightarrow(T-\bar{H})+0}\Lambda(t,\varepsilon)$ exists.

Let $\{H_n\}$ be any sequence satisfying $H_n\in (T-\bar{H},T-H]$ and $H_n\downarrow (T-\bar{H})$. Rewrite equation \eqref{5.3.5a} in integral form, we can obtain
\begin{align}\label{unknownxin}
  & \|\Lambda(H_{n+m},\varepsilon)-\Lambda(H_{n},\varepsilon)\|_X\nonumber\\
  \leq & \int_{H_{n+m}}^{H_{n}}\big[\|f(\bar{y}+\Lambda,s,\varepsilon)-f(\bar{y},s,\varepsilon)\|_X+\|f(\bar{y},s,\varepsilon)-f(\bar{y},s,0)\|_X\big]ds\nonumber\\
  \leq & \int_{H_{n+m}}^{H_{n}}\big[N\|\Lambda(s,\varepsilon)\|_X+\omega(\varepsilon)\big]ds\nonumber\\
  \leq & \big[NC+\omega(\varepsilon)\big](H_{n}-H_{n+m}).
\end{align}
Then, $\{\Lambda(H_{n},\varepsilon)\}$ is a Cauchy sequence. Since $X$ is a Banach space, $\Lambda(H_n,\varepsilon)$ converges to a certain limit $\bar{\Lambda}(\varepsilon)\in X$, i.e., $\lim\limits_{n\rightarrow\infty}\|\Lambda(H_n,\varepsilon)-\bar{\Lambda}(\varepsilon)\|_X=0$.
For any sequence $\{H_n\}$ satisfying the above mentioned conditions, the limits of $\{\Lambda(H_n,\varepsilon)\}$ are identical.
Indeed, assume that $\{H_n^i\}$, $i=1,2$ are sequences satisfy $H_n^i\downarrow (T-\bar{H})$. Denote $\bar{\Lambda}^i(\varepsilon)$ be the limit of $\{\Lambda(H_n^i,\varepsilon)\}$. Then, for any $\epsilon>0$, there exists $N_0$ such that, for $n>N_0$, one has $$\|\bar{\Lambda}^i(\varepsilon)-\Lambda(H_n^i,\varepsilon)\|_X<\frac{\epsilon}{3}, \; i=1,2, \quad |H_n^1-H_n^2|<\frac{\epsilon}{3(NC+\omega(\varepsilon))}.$$
Similar to \eqref{unknownxin}, we have
\begin{align*}
  & \|\bar{\Lambda}^1(\varepsilon)-\bar{\Lambda}^2(\varepsilon)\|_X\\ = & \|\bar{\Lambda}^1(\varepsilon)-\Lambda(H_n^1,\varepsilon)+\Lambda(H_n^1,\varepsilon)-\Lambda(H_n^2,\varepsilon)+\Lambda(H_n^2,\varepsilon)-\bar{\Lambda}^2(\varepsilon)\|_X\\
  \leq & \|\bar{\Lambda}^1(\varepsilon)-\Lambda(H_n^1,\varepsilon)\|_X+\|\Lambda(H_n^1,\varepsilon)-\Lambda(H_n^2,\varepsilon)\|_X+\|\Lambda(H_n^2,\varepsilon)-\bar{\Lambda}^2(\varepsilon)\|_X\\
  < & \epsilon.
\end{align*}
As $\epsilon$ is arbitrary, $\bar{\Lambda}^1(\varepsilon)=\bar{\Lambda}^2(\varepsilon)$ holds.
Then, by Heine theorem, we obtain $\lim\limits_{t\rightarrow(T-\bar{H})+0}\Lambda(t,\varepsilon)=\bar{\Lambda}(\varepsilon)$. By \cite[Th 2.2]{guo}, one has $\|\bar{\Lambda}(\varepsilon)\|_X=C$.

Denote
\begin{equation*}
  \tilde{\Lambda}(t,\varepsilon)=\left\{\begin{split}
                                   & \bar{\Lambda}(\varepsilon).\ t=T-\bar{H}, \\
                                     & \Lambda(t,\varepsilon),\ T-\bar{H}<t\leq T.
                                 \end{split}\right.
\end{equation*}
Then $\tilde{\Lambda}(t,\varepsilon)$ is continuous on $[T-\bar{H},T]$. Equation \eqref{5.3.5a} and continuity of $\tilde{\Lambda}(t,\varepsilon)$ yield
\begin{equation*}
  -\tilde{\Lambda}(t,\varepsilon)=\int_t^T[f(\bar{y}+\tilde{\Lambda},s,\varepsilon)-f(\bar{y},s,0)]ds,\quad t\in [T-\bar{H},T].
\end{equation*}
By Gronwall's lemma,
\begin{align*}
  \|\tilde{\Lambda}(t,\varepsilon)\|_X\leq & \int_t^T\big[\|f(\bar{y}+\tilde{\Lambda},s,\varepsilon)-f(\bar{y},s,\varepsilon)\|_X+\|f(\bar{y},s,\varepsilon)-f(\bar{y},s,0)\|_X\big]ds\\
  \leq & \int_t^T\big[N\|\tilde{\Lambda}(s,\varepsilon)\|_X+\omega(\varepsilon)\big]ds\\
  \leq & T\omega(\varepsilon)e^{NT}.
\end{align*}
Thus, there exists $\varepsilon_0>0$, such that for $|\varepsilon|\leq\varepsilon_0$, one has $T\omega(\varepsilon)e^{NT}<C$, which implies
$$\|\bar{\Lambda}(\varepsilon)\|_X=\|\tilde{\Lambda}(T-\bar{H},\varepsilon)\|_X<C.$$
Then, for $|\varepsilon|\leq\varepsilon_0$, we obtain a contradiction. As a result, for $|\varepsilon|\leq\varepsilon_0$, one has $\bar{H}>T$. Thus, problem \eqref{5.3.5a} admits a unique solution on $\widetilde{D}$, which means that problem \eqref{4-11} has a unique solution on $\widetilde{D}$. This further yields that $\bar{y}(t)+\Lambda(t,\varepsilon)$ is a solution for the original problem.
\end{proof}

We now provide the proof of Theorem \ref{existencethm}.

\begin{proof}
It is easy to see that the RHS of \eqref{4-9} defines a function from $\mathcal{D}\times [-\bar{\varepsilon},\bar{\varepsilon}]$ to $\mathcal{D}$, which is continuous with respect to $(\Pi,\varepsilon)$ and satisfies \eqref{4-10}. From Theorem \ref{tuiguang}, we only need to show that \eqref{4-9} admits a solution on $[0,T]$ for the case $\varepsilon=0$.

Denote $\Pi^i=\begin{pmatrix} \Pi_{11}^i &\Pi_{12}^i\\ (\Pi_{12}^i)^* &\Pi_{22}^i\end{pmatrix}$. From \eqref{4-9}, for $\varepsilon=0$, one has
\begin{align*}
  0= & \frac{d}{dt}\begin{pmatrix} \Pi_{11}^1 &\Pi_{12}^1\\ (\Pi_{12}^1)^* &\Pi_{22}^1\end{pmatrix}+\begin{pmatrix} \Pi_{11}^1(A_1I+D_1M_1) & \Pi_{12}^1(A_2I+D_2M_2) \\ (\Pi_{12}^1)^*(A_1I+D_1M_1)& \Pi_{22}^1(A_2I+D_2M_2) \end{pmatrix}\\
  &+\begin{pmatrix} (A_1I+D_1M_1)\Pi_{11}^1 & (A_1I+D_1M_1)\Pi_{12}^1 \\ (A_2I+D_2M_2)(\Pi_{12}^1)^* & (A_2I+D_2M_2)\Pi_{22}^1 \end{pmatrix}\\
  &-\frac{2(B_1)^2}{R_{11}}\begin{pmatrix} (\Pi_{11}^1)^2 & \Pi_{11}^1\Pi_{12}^1\\ (\Pi_{12}^1)^*\Pi_{11}^1 & (\Pi_{12}^i)^*\Pi_{12}^1\end{pmatrix}-\frac{2(B_2)^2}{R_{22}}\begin{pmatrix} \Pi_{12}^1(\Pi_{12}^2)^* & \Pi_{12}^1\Pi_{22}^2\\ \Pi_{22}^1(\Pi_{12}^2)^* & \Pi_{22}^1\Pi_{22}^2\end{pmatrix}\\
  &-\frac{2(B_2)^2}{R_{22}}\begin{pmatrix} \Pi_{12}^2(\Pi_{12}^1)^* & \Pi_{12}^2\Pi_{22}^1\\ \Pi_{22}^2(\Pi_{12}^1)^* & \Pi_{22}^2\Pi_{22}^1\end{pmatrix}+\frac{1}{2}\bar{Q}_1\begin{pmatrix} (I-\Gamma_1M_1)^2 & 0 \\ 0 & 0 \end{pmatrix}
\end{align*}
and
$$
\begin{pmatrix} \Pi_{11}^1(T) &\Pi_{12}^1(T) \\ (\Pi_{12}^1(T))^* &\Pi_{22}^1(T) \end{pmatrix}=\frac{1}{2}\bar{Q}_{1f}\begin{pmatrix} I & 0 \\ 0 & 0 \end{pmatrix}.
$$
A similar result holds for $\Pi^2$. By observing the form of the equation, we can find a particular solution $\bar{\Pi}=\begin{pmatrix} \bar{\Pi}^1 &  \\  & \bar{\Pi}^2 \end{pmatrix}\in \mathcal{D}$ to \eqref{4-9}, where $\bar{\Pi}_{11}^1$, $\bar{\Pi}_{22}^2$ are the unique solutions of the following operator-valued Riccati equations for $i=1,2$, respectively(see \cite[Part IV, Chapter 1, Th 2.1]{Bensoussan2} for the existence and uniqueness of the equations):
\begin{equation*}
\left\{
\begin{split}
  &0=\frac{d}{dt}\Pi_{ii}^i+\Pi_{ii}^i(A_iI+D_iM_i)+(A_iI+D_iM_i)\Pi_{ii}^i-\frac{2(B_i)^2}{R_{ii}}(\Pi_{ii}^i)^2+\frac{1}{2}\bar{Q}_i(I-\Gamma_iM_i)^2,\\
  &\Pi_{ii}^i(T)=\frac{1}{2}\bar{Q}_{if}I,
\end{split}
\right.
\end{equation*}
and $\bar{\Pi}_{12}^1,\bar{\Pi}_{22}^1,\bar{\Pi}_{11}^2,\bar{\Pi}_{12}^2=0$. Then, equation \eqref{4-9} admits a solution on $[0,T]$ for the case $\varepsilon=0$. By Theorem \ref{tuiguang}, there exists $\varepsilon_0>0$, such that for $\varepsilon\in [-\varepsilon_0,\varepsilon_0]$, equation \eqref{4-9} has a solution on $[0,T]$.
\end{proof}

\section{Conclusions and future work}
\qquad This article is concerned with the study of the nonzero-sum stochastic differential games between two graphon teams. A Nash equilibrium for the two-team Nash game is derived through the dynamic programming approach, and the existence of solutions of the corresponding coupled operator-valued Riccati-type equations is obtained. It is worth noticing that, in our work, we utilize nonlocal diffusion equations to describe the dynamics of populations, where each contains a continuum of agents. Thus, it would be interesting to discuss the two-team Nash game problem based on the model appears in \cite{Coppini}. We leave this as our future work.

\section*{Acknowledgement}
\qquad The authors would like to thank Prof. Qi L\"{u} and Ms. Yue Zeng for their helpful suggestions and discussions.

\end{document}